\newcommand{\Ad}{{\mathrm{Ad}}}
\newcommand{\GL}{{\mathrm{GL}}}
\newcommand{\Hom}{{\mathrm{Hom}}}
\newcommand{\rk}{{\mathrm{k}}}
\newcommand{\Sp}{{\mathrm{Sp}}}
\newcommand{\tr}{{\mathrm{tr}}}
\newcommand{\vsp}{{\vspace{0.2in}}}
\newcommand{\oJ}{\operatorname{J}}
\newcommand{\oH}{\operatorname{H}}
\newcommand{\oO}{\operatorname{O}}
\newcommand{\oU}{\operatorname{U}}
\renewcommand{\rk}{\mathrm k}
\newcommand{\C}{\mathbb{C}}
\newcommand{\G}{\mathbf{G}}
\newcommand{\E}{\mathbf{E}}
\newcommand{\ve}{{\vee}}
\newcommand{\la}{\langle}
\newcommand{\ra}{\rangle}
\newcommand{\be}{\begin {equation}}
\newcommand{\ee}{\end {equation}}
\newcommand{\bee}{\begin {equation*}}
\newcommand{\eee}{\end {equation*}}
\theoremstyle{Theorem}
\theoremstyle{Theorem}
\newtheorem{thm}{Theorem}[section]
\theoremstyle{Theorem}
\newtheorem{lem}{Lemma}[section]
\newtheorem{thml}[lem]{Theorem}
\newtheorem{prpl}[lem]{Proposition}
\theoremstyle{Theorem}
\newtheorem{prp}{Proposition}[section]
\newtheorem{lemp}[prp]{Lemma}
\theoremstyle{Plain}
\theoremstyle{Definition}
\newtheorem*{rmk}{Remarks}
\begin{document}

\title[Contragredients of representations]{Dual pairs and contragredients of irreducible representations}

\author{Binyong Sun}

\address{Academy of Mathematics and Systems Science, Chinese Academy of
Sciences, Beijing, 100190, China $\&$  Department of Mathematics, Hong Kong University of Science and Technology, Clear Water Bay, Hong Kong}

\email{sun@math.ac.cn}

\subjclass[2000]{22E35, 22E46}
\keywords{contragredient representation, dual pair, irreducible
representation}

\thanks{Supported by NSFC (Grants No. 10801126 and 10931006)}

\begin{abstract}
Let $G$ be a classical group $\GL(n)$, $\oU(n)$, $\oO(n)$ or
$\Sp(2n)$, over a non-archimedean local field of characteristic
zero.  It is well known that the contragredient of an irreducible admissible smooth representation
of $G$ is isomorphic to a twist of it by an automorphism of $G$. We prove that
similar results hold for double covers of $G$ which occur in the study of
local theta correspondences.
\end{abstract}

 \maketitle


\section{Introduction and the results}\label{intro}

Fix a non-archimedean local field $\rk$ of characteristic zero. We introduce the following notations in order to treat the four series of classical groups $\GL(n)$, $\oU(n)$, $\oO(n)$ and $\Sp(2n)$ simultaneously. Let $A$ be a $\rk$-algebra and $\tau$ be a $\rk$-algebra involution of
$A$ so that
\[
 (A,\tau)=\left\{
                  \begin{array}{l}
                 (\rk\times \rk, \textrm{the nontrivial
                 automophism}),\\
                  (\textrm{a quadratic field extension of $\rk$}, \textrm{the nontrivial
                 automophism}),\, \textrm{or}\\
                    (\rk, \textrm{the trivial
                 automophism}).
                 \end{array}
            \right.
\]
Let $\epsilon=\pm 1$ and let $E$ be an $\epsilon$-hermitian
$A$-module, namely it is a free $A$-module of finite rank, equipped
with a non-degenerate $\rk$-bilinear map
\[
  \la\,,\,\ra_E:E\times E\rightarrow A
\]
satisfying
\[
     \la u,v\ra_E=\epsilon\la v,u\ra_E^\tau, \quad \la au,v\ra_E=a\la u,
     v\ra_E,\quad a\in A,\, u,v\in E.
\]
Denote by $\oU(E)$ the group of all $A$-module automorphisms of $E$
which preserve the form $\la\,,\,\ra_E$. Depending on $A$ and
$\epsilon$, it is a general linear group, unitary group, orthogonal
group or symplectic group.

Following Moeglin-Vigneras-Waldspurger (\cite[Proposition
4.I.2]{MVW87}), we extend $\oU(E)$ to a larger group, which is
denoted by $\breve{\oU}(E)$, and consisting of pairs
$(g,\delta)\in\GL_\rk(E)\times \{\pm 1\}$ such that either
\[
  \delta=1 \quad\textrm{and}\quad g\in \oU(E),
\]
or
\[
\label{dutilde}
  \left\{
   \begin{array}{ll}
     \delta=-1,&\medskip\\
     g(au)=a^\tau g(u),\quad & a\in A,\, u\in E,\quad \textrm{ and}\medskip\\
     \la gu,gv\ra_E=\la v,u\ra_E,\quad & u,v\in E.
   \end{array}
   \right.
\]
Clearly $\breve{\oU}(E)$ contains $\oU(E)$ as a subgroup of index
two.

In general, if $\pi$ is a representation of a group $H$, and $g$ is
an element of a group which acts on $H$ as automorphisms, we define the twist
$\pi^{{g}}$ to be the representation of $H$ which has the same
underlying space as that of $\pi$, and whose action is given by
\[
   \pi^{{g}}(h):=\pi({g}.h), \quad h\in H.
\]
If $\breve H$ is a group containing $H$ as a subgroup of index two,
we always let it act on $H$ by conjugations: \[
  \Ad: \breve H\times H\rightarrow H,\quad (\breve g, x)\mapsto
  \Ad_{\breve g}(x):=\breve{g}x\breve{g}^{-1}.
\]

It is a classical result in linear algebra that for any $\breve g\in \breve{\oU}(E)\setminus \oU(E)$ and any $x\in \oU(E)$, $\breve g x \breve{g}^{-1}$ is conjugate to $x^{-1}$ inside $\oU(E)$. Then by the localization principle of Bernstein and Zelevinsky (\cite[Theorem 6.9 and Theorem 6.15 A]{BZ76}), for every conjugation invariant generalized function $f$ on $\oU(E)$, and every $\breve g\in \breve{\oU}(E)\setminus \oU(E)$, we have that
\begin{equation}\label{finve}
  f(\breve g x \breve{g}^{-1})=f(x^{-1})
\end{equation}
as generalized functions on $\oU(E)$. For the usual notion of
generalized functions, see \cite[Section 2]{Sun08}, for example.
We get the following well know result by \eqref{finve} and by
considering characters of irreducible admissible smooth
representations (which are conjugation invariant generalized functions).

\begin{thm}\label{contra}
$($\cite[Theorem 4.II.1]{MVW87}$)$ Let $\breve{g}\in \breve{\oU}(E)\setminus \oU(E)$, and let $\pi$ be
an irreducible admissible smooth representation of $\oU(E)$. Then
$\pi^{\ve}$ is isomorphic to $\pi^{\breve{g}}$.
\end{thm}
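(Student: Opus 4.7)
The plan is to reduce the statement to an equality of distribution characters. Recall that every irreducible admissible smooth representation $\pi$ of the reductive $\rk$-analytic group $\oU(E)$ has a distribution character $\chi_\pi$, which is a conjugation invariant generalized function on $\oU(E)$, and that two such representations are isomorphic if and only if their characters coincide (Harish-Chandra). Thus it suffices to prove that $\chi_{\pi^\ve}=\chi_{\pi^{\breve g}}$ as generalized functions on $\oU(E)$.

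First I would unwind the definitions to identify each of these characters. Directly from the construction of the contragredient, the character of $\pi^\ve$ at a regular element $x\in \oU(E)$ is $\chi_\pi(x^{-1})$. From the definition of the twist $\pi\mapsto \pi^{\breve g}$ given in the introduction, the character of $\pi^{\breve g}$ at $x$ is
\[
  \chi_{\pi^{\breve g}}(x)=\chi_\pi(\Ad_{\breve g}(x))=\chi_\pi(\breve g x \breve g^{-1}).
\]
Both identities hold at the level of generalized functions.

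The final step is to invoke the identity \eqref{finve} with $f=\chi_\pi$, which is available because $\chi_\pi$ is conjugation invariant: it gives precisely
\[
  \chi_\pi(\breve g x \breve g^{-1})=\chi_\pi(x^{-1})
\]
as generalized functions on $\oU(E)$. Combining with the previous step yields $\chi_{\pi^\ve}=\chi_{\pi^{\breve g}}$, and hence $\pi^\ve\cong \pi^{\breve g}$.

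The bulk of the content has already been absorbed into \eqref{finve}, which packages together the classical linear algebra fact (that $\breve g x\breve g^{-1}$ and $x^{-1}$ are $\oU(E)$-conjugate for every $x\in \oU(E)$) with the Bernstein-Zelevinsky localization principle. So the only genuine obstacle here is citing the right character-theoretic input: one must ensure that the distribution character really determines an irreducible admissible smooth representation up to isomorphism in the non-archimedean setting, which is exactly the content of the standard linear independence of characters theorem for $\rk$-analytic reductive groups.
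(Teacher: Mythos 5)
Your proposal is correct and follows exactly the route the paper itself takes: it deduces Theorem \ref{contra} from the identity \eqref{finve} applied to the distribution character of $\pi$, together with the standard facts that the character of $\pi^{\ve}$ is $x\mapsto \chi_\pi(x^{-1})$, the character of $\pi^{\breve g}$ is $x\mapsto \chi_\pi(\breve g x\breve g^{-1})$, and that irreducible admissible smooth representations are determined by their characters. Your care in working at the level of generalized functions (rather than pointwise on regular elements) matches the paper's remark that the argument does not rely on local integrability of characters.
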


Here and as usual, we use ``$\,^\ve$" to indicate the contragredient of an admissible
smooth representation of a totally disconnected locally compact group.

\vsp

If $E$ is a symplectic space, i.e., if $\epsilon=-1$ and $A=\rk$,
then $\breve{\Sp}(E):=\breve{\oU}(E)$ equals to the subgroup of
$\operatorname{GSp}(E)$ with similitudes $\pm1$. Denote by
\begin{equation}\label{meta}
   1\rightarrow \{\pm 1\}\rightarrow
   \widetilde{\Sp}(E)\rightarrow
   \Sp(E)\rightarrow 1
\end{equation}
the metaplectic cover of the symplectic group $\Sp(E)$. It is
shown in \cite[Page 36]{MVW87} that there is a unique continuous
action
\begin{equation}\label{liftact0}
   \widetilde{\Ad}:  \breve{\Sp}(E)\times \widetilde{\Sp}(E)\rightarrow \widetilde{\Sp}(E)
\end{equation}
of $\breve{\Sp}(E)$ on $\widetilde{\Sp}(E)$ as group automorphisms which lifts the adjoint action
\[
  \Ad: \breve{\Sp}(E)\times\Sp(E)\rightarrow \Sp(E)
\]
and leaves the central element $-1\in \widetilde{\Sp}(E)$ fixed.

We first extend Theorem \ref{contra} to the case of metaplectic groups:

\begin{thm}\label{contras}
Assume that $E$ is a symplectic space. Let $\breve{g}\in \breve{\Sp}(E)\setminus
\Sp(E)$, and let $\pi$ be a genuine irreducible admissible smooth
representation of $\widetilde{\Sp}(E)$. Then $\pi^{\ve}$ is
isomorphic to $\pi^{\breve{g}}$.
\end{thm}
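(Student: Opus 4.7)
My plan is to mirror the proof of Theorem~\ref{contra}: reduce the assertion to an identity of genuine conjugation invariant generalized functions on $\widetilde{\Sp}(E)$, and apply the Bernstein--Zelevinsky localization principle. Since the distributional character $\Theta_\pi$ is a genuine $\Ad$-invariant generalized function, and since a genuine irreducible admissible smooth representation of $\widetilde{\Sp}(E)$ is determined up to isomorphism by its character via Harish-Chandra's linear independence of characters, the isomorphism $\pi^\ve\cong \pi^{\breve g}$ is equivalent, in view of the identities $\Theta_{\pi^\ve}(\tilde x)=\Theta_\pi(\tilde x^{-1})$ and $\Theta_{\pi^{\breve g}}(\tilde x)=\Theta_\pi(\widetilde{\Ad}_{\breve g}(\tilde x))$, to the identity $\Theta_\pi(\tilde x^{-1})=\Theta_\pi(\widetilde{\Ad}_{\breve g}(\tilde x))$ of generalized functions. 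So it is enough to establish this identity for every genuine conjugation invariant generalized function on $\widetilde{\Sp}(E)$.

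Applying \cite[Theorem 6.9 and Theorem 6.15 A]{BZ76} to the totally disconnected locally compact group $\widetilde{\Sp}(E)$, the identity will follow once we exhibit an $\Ad$-invariant open subset $\widetilde U\subseteq \widetilde{\Sp}(E)$, whose complement carries no genuine conjugation invariant generalized function, on which $\widetilde{\Ad}_{\breve g}(\tilde x)$ and $\tilde x^{-1}$ always lie in a common $\widetilde{\Sp}(E)$-conjugacy class. The natural choice for $\widetilde U$ is the preimage of the regular semisimple locus of $\Sp(E)$. Given $\tilde x\in \widetilde U$ lying over a regular semisimple $x$, the classical MVW theorem supplies $h\in \Sp(E)$ with $h\breve g x\breve g^{-1}h^{-1}=x^{-1}$; lifting $h$ to $\tilde h\in \widetilde{\Sp}(E)$ yields
\[
\tilde h\,\widetilde{\Ad}_{\breve g}(\tilde x)\,\tilde h^{-1} \,=\, \epsilon(\tilde x)\cdot \tilde x^{-1},\qquad \epsilon(\tilde x)\in\{\pm 1\},
\]
and the principal obstacle is to rule out the sign $\epsilon(\tilde x)=-1$: since $\{\pm 1\}$ is central in $\widetilde{\Sp}(E)$ and acts trivially under conjugation, replacing $\tilde h$ by $-\tilde h$ does not change the conjugate, so the ambiguous sign cannot be absorbed into the choice of lift.

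To handle the sign I would first reduce to a single convenient $\breve g$: writing an arbitrary $\breve g\in \breve{\Sp}(E)\setminus \Sp(E)$ as $h_0\breve g_0$ with $h_0\in \Sp(E)$ and $\breve g_0$ fixed, the identity $\widetilde{\Ad}_{h_0\breve g_0}=\widetilde{\Ad}_{h_0}\circ \widetilde{\Ad}_{\breve g_0}$, together with the fact that $\widetilde{\Ad}_{h_0}$ is inner, gives $\pi^{\breve g}\cong \pi^{\breve g_0}$; thus it suffices to verify the theorem for a single $\breve g_0$, for instance the involution associated with a Lagrangian decomposition $E=E^+\oplus E^-$ acting as $+1$ on $E^+$ and $-1$ on $E^-$. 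For this $\breve g_0$ the sign $\epsilon(\tilde x)$ should be pinned down by an explicit cocycle calculation for the metaplectic cover, evaluated on the preimage of a $\breve g_0$-stable maximal torus of $\Sp(E)$; once $\epsilon=+1$ has been confirmed on a dense subset of $\widetilde U$, local constancy of $\epsilon$ on the regular semisimple locus together with conjugation invariance propagates $\epsilon\equiv +1$ throughout $\widetilde U$. This explicit sign calculation is the technical heart of the proof; everything else parallels the classical case.
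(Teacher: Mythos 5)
Your plan has a genuine gap at its foundation: the reduction of the character identity on all of $\widetilde{\Sp}(E)$ to an identity on the regular semisimple locus. The assertion that the complement of $\widetilde U$ ``carries no genuine conjugation invariant generalized function'' is false as stated (for instance $\delta_{\tilde 1}-\delta_{-\tilde 1}$ is genuine, conjugation invariant, and supported on the center); the version you actually need --- that a genuine irreducible character is determined by its restriction to the regular set --- is essentially Harish-Chandra's regularity/local integrability theorem. As Remark (b) following the theorem points out, that theorem is known for $p$-adic linear reductive groups but \emph{not} for the metaplectic group, and the Bernstein--Zelevinsky localization principle, as used for Theorem \ref{contra}, does not proceed by discarding a singular set: it needs $\Ad_{\breve g}(x)$ and $x^{-1}$ to lie in a common orbit for \emph{every} $x$, which is exactly what the central sign obstructs on the cover. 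What you have reconstructed is, in essence, the argument of \cite[Theorem 4.II.2]{MVW87}, which is precisely why that reference must assume the character of $\pi$ is a locally integrable function. (The sign computation via an explicit cocycle on a $\breve g_0$-stable torus is also only promised, not carried out, but it is the regularity issue that is fatal to the plan.)

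The paper's proof avoids working with invariant generalized functions on the cover altogether. It forms the Jacobi group $\oJ(E)=\oU(E)\ltimes \oH(E)$, a linear group, and invokes the theorem of \cite[Theorem D]{Sun08} that every conjugation invariant generalized function $f$ on $\oJ(E)$ satisfies $f(\breve g x\breve g^{-1})=f(x^{-1})$; this gives the analogue of Theorem \ref{contra} for irreducible admissible representations of $\oJ(E)$ by the character argument. The tensor product $\omega_\psi\otimes\pi$ with the oscillator representation descends to such a representation of $\oJ(E)$ (the genuineness cancels), one has $\omega_\psi^{\breve g}\cong\omega_\psi^\ve$, and $\pi$ is recovered from the tensor product as $\Hom_{\oH(E)}(\omega_\psi^\ve,\omega_\psi^\ve\otimes\pi)$; combining these yields $\pi^{\breve g}\cong\pi^\ve$ with no regularity input on $\widetilde{\Sp}(E)$. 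If you want to salvage your route, the missing ingredient is a substitute for Harish-Chandra's theorem on the metaplectic group, which is a substantially harder result than the one being proved.
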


Here and henceforth, ``genuine" means that the central element $-1\in \widetilde{\Sp}(E)$
acts via the scalar multiplication by $-1$.

\begin{rmk}
(a) Under the hypothesis that the character of $\pi$ is a locally integrable function, Theorem \ref{contras} is proved in \cite[Theorem 4.II.2]{MVW87}.

(b) Harish-Chandra proves locally integrability of irreducible characters for p-adic linear reductive groups. But metaplectic groups are not included in his setting.

(c) The proofs of both Theorem \ref{contra} and Theorem \ref{contras} do not depend on locally integrability of irreducible characters.
\end{rmk}

\vsp

Now we consider dual pairs. Write $\epsilon':=-\epsilon$, and
let $(E',\la\,,\,\ra_{E'})$ be an $\epsilon'$-hermitian $A$-module.
Then
\[
   \E:=E\otimes_A E'
\]
is a skew-hermitian $A$-module under the form
\[
  \la u\otimes u', v\otimes v'\ra_\E:=\la u, v\ra_E \, \la
  u',v'\ra_{E'}.
\]
Write $\E_\rk:=\E$, viewed as a $\rk$-symplectic space under the form
\[
  \la u,v\ra_{\E_\rk}:=\tr_{A/\rk}(\la u,v\ra_\E).
\]
Put
\[
   G:=\oU(E),\quad
   \breve{G}:=\breve{\oU}(E),\quad G':=\oU(E'),\quad \breve{G'}:=\breve{\oU}(E').
\]
The group $G$ obviously maps to the symplectic group $\Sp(\E_\rk)$. Denote the
fiber product
\[
  \widetilde G:=\widetilde{\Sp}(\E_\rk)\times_{\Sp(\E_\rk)} G,
\]
which is a double cover of $G$.

In what follows, we define an action
\begin{equation}\label{liftact}
   \widetilde{\Ad}:  \breve{G}\times \widetilde{G}\rightarrow
   \widetilde{G}
\end{equation}
which lifts the adjoint action
\[
  \Ad: \breve{G}\times G\rightarrow G
\]
and fixes the central element $-1\in \widetilde{G}$. Let
$\breve{g}=(g,\delta)\in \breve{G}$. Choose an arbitrary element
$(g',\delta)\in \breve{G'}$. Then
\[
  \breve{\mathbf g}:=(g\otimes g',\delta)\in \breve{\Sp}(\E_\rk),
\]
and the automorphism
\begin{equation}\label{actprod}
  \widetilde{\Ad}_{\breve{\mathbf g}}\times \Ad_{\breve g}: \widetilde{\Sp}(\E_\rk)\times
  G\rightarrow \widetilde{\Sp}(\E_\rk)\times G
\end{equation}
leaves the subgroup $\widetilde{G}$ stable. It restricts to an
automorphism
\begin{equation}\label{actrestrict}
  \widetilde{\Ad}_{\breve g}: \widetilde{G}\rightarrow
  \widetilde{G}
\end{equation}
which is independent of the choice of $g'$. We obtain
(\ref{liftact}) by gluing (\ref{actrestrict}) for all $\breve{g}\in
\breve{G}$.

We unify and generalize Theorem \ref{contra} and Theorem \ref{contras} as follows:
\begin{thm}\label{contra2}
Let $\breve{g}\in \breve G\setminus G$, and let $\pi$ be a genuine
irreducible admissible smooth representation of $\widetilde G$. Then
$\pi^{\ve}$ is isomorphic to $\pi^{\breve{g}}$.
\end{thm}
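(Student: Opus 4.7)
The strategy is to show that the distribution characters of $\pi^{\ve}$ and $\pi^{\breve g}$ coincide on $\widetilde G$; since a genuine irreducible admissible smooth representation of $\widetilde G$ is determined by its character, this suffices. Unraveling the definitions, the character equality amounts to the distributional identity
\[
f\bigl(\widetilde{\Ad}_{\breve g}(\widetilde x)\bigr)=f(\widetilde x^{-1})
\]
for every genuine conjugation-invariant generalized function $f$ on $\widetilde G$, in exact parallel with \eqref{finve}. I would apply the Bernstein--Zelevinsky localization principle to $\widetilde G$ (a totally disconnected locally compact group) to reduce this identity to a geometric statement: for every $\widetilde x\in\widetilde G$ projecting to a regular semisimple $x\in G$ in some dense open subset, the element $\widetilde{\Ad}_{\breve g}(\widetilde x)$ is $\widetilde G$-conjugate to $\widetilde x^{-1}$.

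To produce a conjugator, I would first invoke the classical linear-algebra fact recalled before Theorem \ref{contra}: there exists $y\in G$ with $y(\breve g x\breve g^{-1})y^{-1}=x^{-1}$. Picking any lift $\widetilde y\in\widetilde G$ of $y$, the element $\widetilde y\cdot\widetilde{\Ad}_{\breve g}(\widetilde x)\cdot\widetilde y^{-1}$ projects to $x^{-1}$ and hence equals $\pm\widetilde x^{-1}$ in $\widetilde G$. A short check using that $-1$ is central and that, for regular semisimple $x$, the different choices of $y$ differ by an element of the abelian centralizer $\oZ_G(x)$, confirms that this sign is independent of all choices involved.

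The hard part is to rule out the minus sign; this is the main obstacle. Here I would exploit the fiber-product construction $\widetilde G\subset\widetilde{\Sp}(\E_\rk)$ and the fact that $\widetilde{\Ad}_{\breve g}$ is the restriction of $\widetilde{\Ad}_{\breve{\mathbf g}}$ with $\breve{\mathbf g}=(g\otimes g',\delta)$. Working in the ambient metaplectic group and using an explicit Schr\"odinger model of the Weil representation, one can compute $\widetilde y\cdot\widetilde{\Ad}_{\breve g}(\widetilde x)\cdot\widetilde y^{-1}$ directly in terms of the metaplectic cocycle. The freedom in choosing $(g',\delta)\in\breve{G'}$ (which does not affect $\widetilde{\Ad}_{\breve g}$) can be used to pick a convenient $g'$, for instance an MVW-style involution on $E'$, which collapses the cocycle contributions and forces the sign to be $+1$. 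This reuses and extends the computations underlying Theorem \ref{contras}, which handles the special case $G=\Sp(E)$.

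Once the conjugacy $\widetilde{\Ad}_{\breve g}(\widetilde x)\sim\widetilde x^{-1}$ is established inside $\widetilde G$ on a dense open set, the Bernstein--Zelevinsky localization principle delivers the distributional identity above, and applied to the character of $\pi$ this yields $\pi^{\ve}\cong\pi^{\breve g}$, completing the proof.
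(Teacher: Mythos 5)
Your plan is essentially the Moeglin--Vigneras--Waldspurger route (establish a geometric conjugacy $\widetilde{\Ad}_{\breve g}(\widetilde x)\sim\widetilde x^{-1}$ on the regular set, then pass to characters), and this is precisely the route the paper is written to avoid; as proposed it has two genuine gaps. First, the sign problem you flag as ``the hard part'' is the entire content of the theorem, and you do not resolve it: you assert that a convenient choice of $g'$ will ``collapse the cocycle contributions,'' but by construction $\widetilde{\Ad}_{\breve g}$ is independent of $g'$, so choosing $g'$ cannot change the sign, and no actual computation with Rao's cocycle is given. Even in \cite{MVW87} this conjugacy is only established on a dense subset of the regular semisimple locus of $\widetilde{\Sp}(E)$, not for all elements and not for the general fiber-product covers $\widetilde G$. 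Second, and more seriously, even granting the conjugacy on a dense open set of regular semisimple elements, the Bernstein--Zelevinsky localization principle does not deliver the distributional identity $f(\widetilde{\Ad}_{\breve g}(\widetilde x))=f(\widetilde x^{-1})$: for that one needs the map $\widetilde x\mapsto\widetilde{\Ad}_{\breve g}(\widetilde x^{-1})$ to preserve \emph{every} conjugacy class (including the non-semisimple ones), or else one needs the character to be a locally integrable function so that it is determined by its restriction to the regular set. The latter is exactly what is unavailable here --- Harish-Chandra's local integrability theorem does not cover these double covers --- which is why \cite[Theorem 4.II.2]{MVW87} carries a local integrability hypothesis and why the paper's Remark (a)--(c) insists on an argument independent of it. Your proposal silently needs one of these two inputs and supplies neither.

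The paper's actual proof circumvents the cover entirely by reducing to linear groups, where Theorem \ref{contra} and the localization principle apply without any sign analysis. For the metaplectic case it tensors $\pi$ with the oscillator representation to descend to an irreducible representation of the linear Jacobi group $\oJ(E)=\oU(E)\ltimes\oH(E)$, applies the generalized-function identity of \cite[Theorem D]{Sun08} there (Lemma \ref{sun2} and Proposition \ref{contraj}), and recovers $\pi$ via $\Hom_{\oH(E)}(\omega_\psi^\ve,\,\cdot\,)$. For $A\neq\rk$ and for orthogonal groups it constructs, from Kudla's splitting of the metaplectic cover over $\oU(\E)$ (resp.\ over a doubled module $\E'$), a genuine character $\chi$ of $\widetilde G$ with $\chi^{\breve g}=\chi^{-1}$, so that $\pi\otimes\chi$ descends to $G$ and Theorem \ref{contra} finishes the argument. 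If you want to salvage your approach, you would have to either prove the conjugacy $\widetilde{\Ad}_{\breve g}(\widetilde x)\sim\widetilde x^{-1}$ for all $\widetilde x\in\widetilde G$, or first establish local integrability of genuine irreducible characters on these covers; both are substantially harder than the reductions the paper uses.
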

Theorem \ref{contra2} is specified to Theorem \ref{contra} when
$E'=0$, and is specified to Theorem \ref{contras} when $E'=A=\rk$ and
$\epsilon'=1$.

Theorem \ref{contra2} has the following consequence, which is
known to experts (up to a proof of Theorem \ref{contras}). But as far as the author knows, no proof of it in full generality was written down in the literature. 
\begin{thm}\label{contra4}
Denote by $\omega_\psi$ the smooth oscillator representation of
$\widetilde{\Sp}(\E_\rk)$ corresponding to a non-trivial character
$\psi$ of $\rk$. Then for all genuine irreducible admissible
smooth representation $\pi$ of $\widetilde{G}$, and $\pi'$ of
$\widetilde{G}'$, the equality
\[
   \dim \Hom_{G\times G'}(\omega_\psi\otimes \pi\otimes \pi',\C)=\dim \Hom_{G\times G'}(\omega_\psi^\ve \otimes \pi^\ve\otimes
   \pi'^\ve,\C)
\]
holds.
\end{thm}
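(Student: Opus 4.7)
The plan is to realize both sides as $\Hom_{G\times G'}(V,\C)$ and $\Hom_{G\times G'}(V^\sigma,\C)$ for a common representation $V$ and an automorphism $\sigma$ of $G\times G'$; since the trivial representation $\C$ is fixed by any automorphism, the desired equality of dimensions will then be automatic. The main subtlety is to verify the compatibility of the various lifted adjoint actions that makes this realization possible; this rests on the care built into the construction of $\widetilde{\Ad}$ in the paragraphs preceding Theorem~\ref{contra2}.

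Fix $\breve{g}=(g,-1)\in \breve{G}\setminus G$ and $\breve{g}'=(g',-1)\in \breve{G'}\setminus G'$, and set $\breve{\mathbf g}:=(g\otimes g',-1)\in \breve{\Sp}(\E_\rk)\setminus \Sp(\E_\rk)$, so that $\breve{\mathbf g}$ serves as the common auxiliary element in the definitions of $\widetilde{\Ad}_{\breve{g}}$ and (by the symmetric construction) $\widetilde{\Ad}_{\breve{g}'}$. By construction, the projection $\widetilde{G}\to \widetilde{\Sp}(\E_\rk)$ intertwines $\widetilde{\Ad}_{\breve{g}}$ with $\widetilde{\Ad}_{\breve{\mathbf g}}$, and the projection $\widetilde{G'}\to \widetilde{\Sp}(\E_\rk)$ intertwines $\widetilde{\Ad}_{\breve{g}'}$ with the same $\widetilde{\Ad}_{\breve{\mathbf g}}$. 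Since $\widetilde{G}$ and $\widetilde{G'}$ have commuting images in $\widetilde{\Sp}(\E_\rk)$, the multiplication map $\widetilde{G}\times \widetilde{G'}\to \widetilde{\Sp}(\E_\rk)$ is a group homomorphism that intertwines the product automorphism $(\widetilde{\Ad}_{\breve{g}},\widetilde{\Ad}_{\breve{g}'})$ with $\widetilde{\Ad}_{\breve{\mathbf g}}$.

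Applying Theorem~\ref{contras} to $\omega_\psi$ and Theorem~\ref{contra2} to $\pi$ and $\pi'$ yields $\omega_\psi^\ve\cong \omega_\psi^{\breve{\mathbf g}}$, $\pi^\ve\cong \pi^{\breve{g}}$, and $\pi'^\ve\cong \pi'^{\breve{g}'}$. Pulling $\omega_\psi$ back to $\widetilde{G}\times \widetilde{G'}$ through the multiplication map, and using the intertwining of the previous paragraph, the tensor product of these three isomorphisms becomes an isomorphism of $\widetilde{G}\times \widetilde{G'}$-representations
\[
  \omega_\psi^\ve\otimes \pi^\ve\otimes \pi'^\ve \;\cong\; \bigl(\omega_\psi\otimes \pi\otimes \pi'\bigr)^{(\breve{g},\breve{g}')}.
\]
A short computation using the genuineness of each tensor factor shows that both sides descend to representations of $G\times G'$, and that the right-hand automorphism descends to $\Ad_{\breve{g}}\times \Ad_{\breve{g}'}$ on $G\times G'$; the $G\times G'$-invariant functionals into $\C$ then correspond bijectively between the two sides via this twist, yielding the asserted equality of dimensions.
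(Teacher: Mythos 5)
Your argument follows the same architecture as the paper's proof: twist each of the three factors by compatible outer automorphisms, identify the twisted tensor product with $\omega_\psi^\ve\otimes\pi^\ve\otimes\pi'^\ve$, and observe that twisting by an automorphism of $G\times G'$ does not change the dimension of the space of invariant functionals. The paper packages exactly this as Lemmas \ref{contrao}--\ref{contrat} together with the identity \eqref{home}. Your compatibility checks (the projections intertwining $\widetilde{\Ad}_{\breve g}$ and $\widetilde{\Ad}_{\breve g'}$ with $\widetilde{\Ad}_{\breve{\mathbf g}}$, the element-wise commutation of the images of $\widetilde G$ and $\widetilde{G'}$ in $\widetilde{\Sp}(\E_\rk)$, and the descent to $G\times G'$ via genuineness) are correct and correspond to what the paper does in forming $\breve{\G}$ and the semidirect product $(\widetilde G\times\widetilde G')\ltimes\oH(\E)$.

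The one step where you diverge, and where the write-up has a genuine though repairable flaw, is the isomorphism $\omega_\psi^\ve\cong\omega_\psi^{\breve{\mathbf g}}$. You obtain it by ``applying Theorem \ref{contras} to $\omega_\psi$'', but that theorem is stated only for \emph{irreducible} genuine representations of the metaplectic group, and $\omega_\psi$ is not irreducible as a representation of $\widetilde{\Sp}(\E_\rk)$ alone: it is the direct sum of its even and odd constituents. The paper avoids this by keeping the Heisenberg group in the picture: $\omega_\psi$ is irreducible for $\widetilde{\Sp}(\E_\rk)\ltimes\oH(\E)$ with central character $\psi$, and both $\omega_\psi^\ve$ and $\omega_\psi^{\xi(\breve{\mathbf g})}$ are oscillator representations with central character $\psi^{-1}$, hence isomorphic by the uniqueness part of the Stone--von Neumann theorem (Lemma \ref{contrao}). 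Your route can be salvaged by applying Theorem \ref{contras} separately to each of the two irreducible summands and summing the resulting isomorphisms, but as written the invocation is not licensed by the statement of the theorem; you should either make that reduction explicit or argue via the Heisenberg group as the paper does.
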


Here $\widetilde G':=\widetilde{\Sp}(\E_\rk)\times_{\Sp(\E_\rk)} G'$
is a double cover of $G'$.  Note that both $\omega_\psi\otimes
\pi\otimes \pi'$ and $\omega_\psi^\ve \otimes \pi^\ve\otimes
   \pi'^\ve$, which are originally representations of $\widetilde{G}\times \widetilde{G}'$, descend to representations of $G\times G'$.

\begin{rmk}
(a) In a sequential paper (\cite{LST}), Theorem \ref{contra4} will
be used to prove multiplicity preservations in theta
correspondences (for all residue characteristics), i.e., the
dimension in Theorem \ref{contra4} is at most one. This is the main reason for the author to provide a detailed proof of Theorem \ref{contra4} in this note. 

(b)The archimedean analogs of Theorem \ref{contra2} and Theorem
\ref{contra4} are proved by T. Przebinda in \cite{Pr88}. His method is different from ours. (He uses the Langlands classification.) 

(c) As showed in \cite{Pr88}, assuming Howe duality conjecture,
Theorem \ref{contra4} implies that theta lifting maps hermitian
representations to hermitian representations.
\end{rmk}

\section{A generalization of Theorem \ref{contras}}

\subsection{Skew hermitian modules and Jacobi groups}\label{sub1}  Assume that $\epsilon=-1$. As in the last section,
$E$ is an $\epsilon$-hermitian $A$-module, and $E_\rk:=E$ is a
symplectic space under the form
\[
  \la u,v\ra_{E_\rk}:=\tr_{A/\rk}(\la u,v\ra_E).
\]
Denote by
\[
  \oH(E):=E_\rk\times \rk
\]
the Heisenberg group associated to $E_\rk$, whose multiplication is
given by
\[
   (u,t)(u',t'):=(u+u', t+t'+\la u,u'\ra_{E_\rk}).
\]
The group $\breve{\oU}(E)$ acts on $\oH(E)$ as group automorphisms
by
\begin{equation}\label{actbr}
  (g, \delta).(u,t):=(gu,\delta t).
\end{equation}
It defines a semidirect product
\[
  \breve{\oJ}(E):=\breve{\oU}(E)\ltimes \oH(E),
\]
which contains
\[
    \oJ(E):={\oU}(E)\ltimes \oH(E)
\]
as a subgroup of index two.

The results of this note depend heavily on following

\begin{lem}\label{sun}(\cite[Theorem D]{Sun08})
Let $f$ be a generalized function on $\oJ(E)$. If it is invariant
under conjugations by $\oU(E)$, i.e.,
\[
    f(gx g^{-1})=f(x),\quad \textrm{for all } g\in\oU(E),
\]
then
\[
    f(\breve{g}x\breve{g}^{-1})=f(x^{-1}),\quad \textrm{for all } \breve{g}\in
    \breve{\oU}(E)\setminus \oU(E).
\]
\end{lem}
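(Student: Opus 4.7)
The plan is to mirror the Bernstein-Zelevinsky argument that yields \eqref{finve} on $\oU(E)$, but now on the Jacobi group. By the localization principle of \cite{BZ76}, the generalized-function identity $f(\breve g x \breve g^{-1})=f(x^{-1})$ will follow once it is established on a $\oU(E)$-invariant dense open subset of $\oJ(E)$; one then propagates to the complement stratum by stratum.

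First I would carry out the direct calculation in the semidirect product. Writing $x=(g,(v,s))\in\oU(E)\ltimes\oH(E)$ and $\breve g=(h,-1)\in\breve{\oU}(E)\setminus\oU(E)$, one checks
\[
   \breve g\, x\,\breve g^{-1}=(hgh^{-1},(hv,-s))\qquad\text{and}\qquad x^{-1}=(g^{-1},(-g^{-1}v,-s)).
\]
Since both carry the same central Heisenberg coordinate $-s$, matching them under $\oU(E)$-conjugation reduces to finding $k\in\oU(E)$ with $kg^{-1}k^{-1}=hgh^{-1}$ and $kg^{-1}v=-hv$. Writing $k=k_0 c$ with $k_0\in\oU(E)$ any fixed solution of $k_0 g^{-1}k_0^{-1}=hgh^{-1}$ (its existence is the classical MVW fact underlying \eqref{finve}) and $c\in Z_{\oU(E)}(g)$, the first condition is automatic and the second becomes the orbit problem $c(g^{-1}v)=-k_0^{-1}hv$ for the centralizer of $g$ acting on $E$. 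A short calculation using $k_0 g k_0^{-1}=hg^{-1}h^{-1}$ and the defining property $\la hu,hw\ra_E=\la w,u\ra_E$ shows that the natural quadratic invariants $\la w,g^j w\ra_E$ of a vector $w\in E$ coincide on the two candidate vectors $g^{-1}v$ and $-k_0^{-1}hv$, so the necessary obstruction to matching vanishes.

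On the open stratum where $g$ is regular semisimple and $v$ is in general position relative to the eigenspace decomposition of $g$, the centralizer orbit through $g^{-1}v$ is cut out precisely by those pairings; hence a suitable $c$ exists, $\breve g x \breve g^{-1}$ and $x^{-1}$ are genuinely $\oU(E)$-conjugate, and the identity of generalized functions holds on this open set by $\oU(E)$-invariance of $f$. To extend across the closed complement I would stratify by the conjugacy type of $g$ and the degeneracy pattern of $v$; each stratum, up to a $\oU(E)$-equivariant fibration, is a Jacobi group for a smaller $\epsilon$-hermitian module, and the same argument applies recursively. The main obstacle lies precisely on the boundary: when $g$ has repeated eigenvalues the centralizer is larger than a torus and the quadratic invariants $\la w,g^j w\ra_E$ no longer classify its orbits on $E$, so the inductive step has to be packaged via the full version of the localization principle (Theorems 6.9 and 6.15 of \cite{BZ76}) in order to rule out spurious invariant generalized functions supported on the singular locus.
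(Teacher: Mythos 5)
The paper does not prove this lemma at all: it is quoted verbatim as Theorem~D of \cite{Sun08}, a result whose proof occupies that entire companion paper. So the real question is whether your from-scratch argument works, and it does not. Your computation of $\breve g x\breve g^{-1}=(hgh^{-1},(hv,-s))$ and $x^{-1}=(g^{-1},(-g^{-1}v,-s))$ is correct, and the matching of the invariants $\la w,g^jw\ra_E$ on the two candidate vectors is also correct; on the open stratum of regular semisimple $g$ with $v$ in general position the two elements are indeed $\oU(E)$-conjugate. The fatal gap is your treatment of the complement. It is simply false that every $\oU(E)$-orbit in $\oJ(E)$ is stable under $x\mapsto \breve g x^{-1}\breve g^{-1}$: take $\oU(E)=\SL_2(\rk)$, $g=\diag(a,a^{-1})$ regular, $h=\diag(1,-1)$, and $v$ a nonzero eigenvector of $g$, say $v=(0,y)$. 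Any $k$ with $kg^{-1}k^{-1}=hgh^{-1}=g$ lies in $w\,Z(g)$ with $w$ the Weyl element, and then $kg^{-1}v$ lies in the opposite eigenline, so it can never equal $-hv$. Thus already for regular semisimple $g$ there is a codimension-one, $\oU(E)$-invariant locus of non-matching orbits, and such a locus can perfectly well support invariant generalized functions.

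Because non-matching orbits exist, the Bernstein--Zelevinsky localization principle cannot ``rule out spurious invariant generalized functions supported on the singular locus'': localization only reduces the global statement to statements on fibers of an invariant map, and on the bad fibers the orbit geometry gives you nothing. Your claim that each boundary stratum ``is a Jacobi group for a smaller $\epsilon$-hermitian module'' to which ``the same argument applies recursively'' is also not accurate --- the problematic strata (e.g.\ $g$ regular with $v$ an eigenvector, or $g$ unipotent) are not smaller Jacobi groups, and the same orbit-matching argument fails on them for the same reason. This is precisely why the lemma is a hard theorem: the actual proof in \cite{Sun08} must show that no nonzero $\oU(E)$-invariant, anti-symmetric generalized function can be supported on the non-matching locus, and this requires Harish-Chandra descent, Fourier transform arguments along the Heisenberg variable $E$ tied to the oscillator representation, and an induction on $\dim E$ --- none of which appears in, or can be replaced by, the localization principle you invoke.
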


Actually, we only need the following lemma, which is much weaker
than Lemma \ref{sun}.

\begin{lem}\label{sun2}
Let $f$ be a conjugation invariant generalized function on
$\oJ(E)$. Then
\[
    f(\breve{g}x\breve{g}^{-1})=f(x^{-1}),\quad \textrm{for all } \breve{g}\in
    \breve{\oJ}(E)\setminus \oJ(E).
\]
\end{lem}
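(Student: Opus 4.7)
The plan is to deduce Lemma \ref{sun2} directly from Lemma \ref{sun}, which is already available as \cite[Theorem D]{Sun08}. The key structural fact I would exploit is the semidirect decomposition $\breve{\oJ}(E) = \breve{\oU}(E) \ltimes \oH(E)$, which lets me factor any element of $\breve{\oJ}(E)\setminus \oJ(E)$ as $\breve{g} = \breve{u}\cdot h$ with $\breve{u} \in \breve{\oU}(E)\setminus \oU(E)$ and $h \in \oH(E)$. The whole content of Lemma \ref{sun2} relative to Lemma \ref{sun} is that (i) we are allowed to assume full $\oJ(E)$-conjugation invariance of $f$ instead of only $\oU(E)$-conjugation invariance, and (ii) we must conclude the identity for all of $\breve{\oJ}(E)\setminus \oJ(E)$, not just for elements of $\breve{\oU}(E)\setminus \oU(E)$. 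The extra invariance hypothesis in (i) will trivially absorb the extra freedom in (ii).

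Concretely, with $\breve{g} = \breve{u}h$ as above, and any $x \in \oJ(E)$, I would compute
\[
  \breve{g}\, x\, \breve{g}^{-1} = \breve{u}\,(hxh^{-1})\,\breve{u}^{-1}.
\]
Since $f$ is conjugation invariant under $\oJ(E)$, it is in particular invariant under $\oU(E)$-conjugation, so Lemma \ref{sun} applies with the element $hxh^{-1} \in \oJ(E)$ in place of $x$ and gives
\[
  f\bigl(\breve{u}\,(hxh^{-1})\,\breve{u}^{-1}\bigr) = f\bigl((hxh^{-1})^{-1}\bigr) = f(h\,x^{-1}\,h^{-1}).
\]
Invoking the $\oH(E)$-conjugation invariance of $f$ (again a consequence of the full $\oJ(E)$-invariance) once more yields $f(hx^{-1}h^{-1}) = f(x^{-1})$, which is the desired identity.

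I do not anticipate any real obstacle: the entire argument is a two-line manipulation that uses Lemma \ref{sun} as a black box. The only point to watch is the book-keeping in the semidirect product --- one must verify that the factorization $\breve g = \breve u h$ is actually available (it is, by definition of $\breve{\oJ}(E)$) and that $hxh^{-1}$ still lies in $\oJ(E)$ (it does, since $\oJ(E)$ is a group). This is presumably why the author notes that Lemma \ref{sun2} is ``much weaker'' than Lemma \ref{sun}: the strengthened conjugation-invariance hypothesis makes the passage from $\breve{\oU}(E)\setminus\oU(E)$ to the larger coset $\breve{\oJ}(E)\setminus\oJ(E)$ completely automatic.
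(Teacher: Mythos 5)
Your proof is correct and is exactly the (implicit) deduction the paper intends: the author states Lemma \ref{sun2} as a weaker consequence of Lemma \ref{sun} without writing out the argument, and your two-step reduction via the factorization $\breve g=\breve u h$ in the semidirect product, followed by absorbing the $\oH(E)$-conjugation using the full $\oJ(E)$-invariance of $f$, is the intended route. No gaps.
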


Lemma \ref{sun2} has the following consequence.

\begin{prpl}\label{contraj}
Let $\breve{g}\in \breve{\oJ}(E)\setminus \oJ(E)$, and let $\pi$
be an irreducible admissible smooth representation of $\oJ(E)$.
Then $\pi^{\ve}$ is isomorphic to $\pi^{\breve{g}}$.
\end{prpl}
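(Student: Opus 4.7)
My plan is to mimic the deduction of Theorem \ref{contra} from equation \eqref{finve} that is given in the introduction, using Lemma \ref{sun2} in place of \eqref{finve}.

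Since $\pi$ is an irreducible admissible smooth representation of the totally disconnected group $\oJ(E)$, its distributional character $\chi_\pi$, defined by $f\mapsto \tr \pi(f)$ for $f\in C_c^\infty(\oJ(E))$, is a well defined conjugation invariant generalized function on $\oJ(E)$. Applying Lemma \ref{sun2} to $f=\chi_\pi$ yields
\[
  \chi_\pi(\breve g \, x\, \breve g^{-1}) \;=\; \chi_\pi(x^{-1})
\]
as generalized functions in $x\in \oJ(E)$. By the definition of the twist, the left hand side is the character of $\pi^{\breve g}$, and by the definition of the contragredient the right hand side is the character of $\pi^\ve$. Hence $\pi^{\breve g}$ and $\pi^\ve$ have identical distributional characters.

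To conclude $\pi^{\breve g}\cong \pi^\ve$, I invoke the principle that an irreducible admissible smooth representation of a totally disconnected locally compact group is determined up to isomorphism by its distributional character. For reductive $p$-adic groups this is classical; here $\oJ(E)$ is not reductive because of its Heisenberg normal subgroup $\oH(E)$, so the invocation is slightly less standard. The cleanest route is to split according to the restriction of the central character of $\pi$ to the center $\rk\subset \oH(E)$: if this restriction is trivial then $\pi$ descends to a representation of the reductive-by-abelian group $\oU(E)\ltimes E_\rk$; if it is a nontrivial character $\psi$, then Stone--von Neumann identifies $\pi$ with $\omega_\psi\otimes \sigma$, where $\omega_\psi$ is the unique irreducible smooth representation of $\oH(E)$ with central character $\psi$ and $\sigma$ is a (possibly genuine) irreducible admissible smooth representation of a suitable cover of $\oU(E)$. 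In either case, $\chi_\pi$ determines the factor on the $\oU(E)$-side and hence $\pi$ itself.

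I expect this character-determination step to be the only point requiring any real care; the entire group-theoretic content of the proposition is already packaged in Lemma \ref{sun2}, and the rest of the argument is purely formal, exactly parallel to the one-line deduction of Theorem \ref{contra} from \eqref{finve}.
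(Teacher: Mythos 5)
Your proof is correct and follows exactly the paper's own argument: apply Lemma \ref{sun2} to the distributional character of $\pi$, identify the two sides as the characters of $\pi^{\breve g}$ and $\pi^\ve$, and conclude by equality of characters. The only difference is your extra caution about the character-determination step; this is in fact standard for irreducible admissible smooth representations of \emph{any} totally disconnected locally compact group (it needs only admissibility and the Jacobson density/linear-independence-of-characters argument, not reductivity), so the paper simply invokes it without the Stone--von Neumann detour.
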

\begin{proof}
Denote by $f$ the character of $\pi$, which is thus a conjugation
invariant generalized function on $\oJ(E)$. Therefore
\begin{equation}\label{feq}
  f(\breve{g}x\breve{g}^{-1})=f(x^{-1})
\end{equation}
by Lemma \ref{sun2}. The left hand side of (\ref{feq}) is the
character of $\pi^{\breve g}$, and the right hand side is the
character of $\pi^\ve$. Therefore  $\pi^{\breve g}$ and $\pi^\ve$
have the same character, and they are thus isomorphic to each
other.
\end{proof}

\subsection{Metaplectic groups and a generalization of Theorem \ref{contras}}\label{pcontras}
We continue with the notation of Section \ref{sub1}. Denote by
\[
  \widetilde{\oU}(E):=\widetilde{\Sp}(E_\rk)\times_{\Sp(E_\rk)} \oU(E)
\]
the double cover of $\oU(E)$ induced by the metaplectic cover
\begin{equation}\label{meta}
   1\rightarrow \{\pm 1\}\rightarrow
   \widetilde{\Sp}(E_\rk)\rightarrow
   \Sp(E_\rk)\rightarrow 1.
\end{equation}
As in \eqref{liftact}, we have an action
\begin{equation}\label{liftact2}
   \widetilde{\Ad}:  \breve{\oU}(E)\times \widetilde{\oU}(E)\rightarrow
   \widetilde{\oU}(E).
\end{equation}

Theorem \ref{contras} is one of the three cases (corresponding to
the three cases of $A$) of the following theorem.
\begin{thml}\label{contram}
Assume that $\epsilon=-1$. Let $\breve{g}\in
\breve{\oU}(E)\setminus \oU(E)$, and let $\pi$ be a genuine
irreducible admissible smooth representation of
$\widetilde{\oU}(E)$. Then $\pi^{\ve}$ is isomorphic to
$\pi^{\breve{g}}$.
\end{thml}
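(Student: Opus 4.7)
The plan is to bootstrap from Proposition \ref{contraj}, which already handles the Jacobi group $\oJ(E)$, by using the Heisenberg--Weil representation as a bridge between genuine representations of $\widetilde{\oU}(E)$ and representations of $\oJ(E)$.

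Fix a non-trivial character $\psi$ of $\rk$ and let $\omega_\psi$ be the Heisenberg--Weil representation of the double cover $\widetilde{\oJ}(E) := \widetilde{\oU}(E) \ltimes \oH(E)$ of $\oJ(E)$, obtained by restriction from $\widetilde{\Sp}(E_\rk) \ltimes \oH(E)$. Inflate $\pi$ from $\widetilde{\oU}(E)$ to $\widetilde{\oJ}(E)$ trivially across $\oH(E)$ and form
\[
  \sigma := \pi \otimes \omega_\psi.
\]
The central element $-1$ of the cover acts on $\sigma$ by $(-1)(-1)=1$, so $\sigma$ descends to an irreducible admissible smooth representation of $\oJ(E)$ whose central character on $\rk \subset \oH(E)$ is $\psi$. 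By the Stone--von Neumann theorem, $\pi \mapsto \sigma$ is a bijection between genuine irreducible admissible smooth reps of $\widetilde{\oU}(E)$ and irreducible admissible smooth reps of $\oJ(E)$ with $\oH(E)$-central character $\psi$.

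Embed $\breve g$ into $\breve{\oJ}(E) = \breve{\oU}(E) \ltimes \oH(E)$ as $\breve g' := (\breve g, 0) \in \breve{\oJ}(E) \setminus \oJ(E)$. Proposition \ref{contraj} then gives $\sigma^\ve \cong \sigma^{\breve g'}$ as representations of $\oJ(E)$. Both sides carry $\oH(E)$-central character $\bar\psi(t) := \psi(-t)$; for $\sigma^{\breve g'}$ this follows from \eqref{actbr} since $\breve g$ has $\delta = -1$. Invoking the two (standard) identifications
\[
  \omega_\psi^\ve \cong \omega_{\bar\psi} \quad\text{and}\quad \omega_\psi^{\breve g'} \cong \omega_{\bar\psi}
\]
as representations of $\widetilde{\oJ}(E)$, I would then rewrite
\[
  \sigma^\ve \cong \pi^\ve \otimes \omega_{\bar\psi} \quad\text{and}\quad \sigma^{\breve g'} \cong \pi^{\breve g} \otimes \omega_{\bar\psi},
\]
and use the Heisenberg--Weil bijection for $\bar\psi$ to cancel the common factor $\omega_{\bar\psi}$, concluding that $\pi^\ve \cong \pi^{\breve g}$.

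The crux is justifying the two identifications of Weil representations. Both come from analogous identifications on $\widetilde{\Sp}(E_\rk) \ltimes \oH(E)$: Stone--von Neumann determines a genuine extension of a Heisenberg representation there uniquely up to a character of $\widetilde{\Sp}(E_\rk)$, which must be trivial since $\Sp(E_\rk)$ is perfect and both representations under comparison are genuine. One still has to verify that the $\breve g'$-twist really does implement the involution $t \mapsto -t$ on the center in a way compatible with the cocycle defining the double cover, but this is a direct consequence of \eqref{actbr} and the functoriality of the Heisenberg--Weil construction under automorphisms.
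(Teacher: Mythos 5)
Your proposal is correct and follows essentially the same route as the paper: tensor $\pi$ with the oscillator representation to descend to the Jacobi group $\oJ(E)$, apply Proposition \ref{contraj} there, identify $\omega_\psi^{\breve g}\cong\omega_\psi^\ve$ (both being the oscillator representation for $\psi^{-1}$, by Stone--von Neumann and perfectness of $\Sp(E_\rk)$), and then cancel the oscillator factor via the Stone--von Neumann bijection. The paper implements the cancellation explicitly as $\pi^{\breve g}\cong\Hom_{\oH(E)}(\omega_\psi^\ve,\omega_\psi^\ve\otimes\pi^{\breve g})$, which is just the inverse of the bijection you invoke.
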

\begin{proof} Denote by ${\omega}_\psi$ the smooth oscillator
representation of $\widetilde{\Sp}(E_\rk)\ltimes \oH(E)$
corresponding to a nontrivial character $\psi$ of $\rk$. Up to
isomorphism, this is the only genuine smooth representation which,
as a representation of $\oH(E)$, is irreducible and has central
character $\psi$.

Both $\omega_{\psi}$ and $\pi$ are viewed as smooth
representations of $\widetilde{\oJ}(E):=\widetilde{\oU}(E)\ltimes
\oH(E)$, via the restriction and the inflation, respectively. The
tensor product $\omega_{\psi}\otimes\pi$ descends to an
irreducible admissible smooth representation of $\oJ(E)$
(\cite[Lemma 5.3 ]{Sun08}).

The actions of $\breve{\oU}(E)$ on $\widetilde{\oU}(E)$, $\oU(E)$
and $\oH(E)$ induce its actions on the semidirect products
$\widetilde{\oJ}(E)$ and $\oJ(E)$. By Proposition \ref{contraj}, as irreducible admissible smooth representations of $\oJ(E)$, 
\[
   (\omega_{\psi}\otimes\pi)^{\breve{g}}\cong
   (\omega_{\psi}\otimes\pi)^\ve,
\]
or the same,
\[
   \omega_{\psi}^{\breve g}\otimes\pi^{\breve{g}}\cong
   \omega_{\psi}^\ve\otimes\pi^\ve.
\]
Note that
\[
  \omega_{\psi}^{\breve g}\cong \omega_{\psi}^\ve
\]
as smooth representations of $\widetilde{\oJ}(E)$. (This is a special case of Lemma \ref{contrao} of the next section.)  Therefore 
\begin{equation}\label{iso1}
   \omega_{\psi}^\ve\otimes\pi^{\breve{g}}\cong
   \omega_{\psi}^\ve\otimes\pi^\ve.
\end{equation}

As in the proof of \cite[Lemma 5.3 ]{Sun08}, we have that
\begin{equation}\label{iso2}
   \pi^{\breve{g}}\cong\Hom_{\oH(E)}(\omega_\psi^\ve, \omega_\psi^\ve\otimes \pi^{\breve g}).
\end{equation}
Here the right hand side carries the action of $\widetilde{\oU}(E)$
given by
\[
  (\tilde{g}.\phi)(v):=g.(\phi(\tilde{g}^{-1}.v)),
\]
where
\[
   \tilde{g}\in\widetilde{\oU}(E), \quad\phi\in  \Hom_{\oH}(\omega_\psi^\ve, \omega_\psi^\ve\otimes \pi^{\breve g}), \quad  v\in \omega_\psi^\ve,
\]
and $g$ is the image of $\tilde g$ under the covering map
$\widetilde{\oU}(E)\rightarrow \oU(E)$. Similarly,
\begin{equation}\label{iso3}
   \pi^\ve\cong\Hom_{\oH}(\omega_\psi^\ve, \omega_\psi^\ve\otimes \pi^\ve).
\end{equation}
We finish the proof by combining (\ref{iso1}), (\ref{iso2}) and
(\ref{iso3}).
\end{proof}

\section{Proofs of Theorem \ref{contra2} and Theorem
\ref{contra4}}

\subsection{Proof of Theorem \ref{contra2} for symplecitc groups}
Now we return to use the notation of Section \ref{intro}. First assume that $A=\rk$ and
$\epsilon=-1$. Then $G$ is a symplectic group and is thus perfect,
i.e., $G$ equals to its own commutator group. Consequently, there
is only one action of $\breve G$ on $\widetilde G$ which lifts the
adjoint action and fixes the central element $-1\in \widetilde G$.
There are two cases.

Case 1: The covering map $\widetilde G\rightarrow G$ splits. Then $
\widetilde G=G\times \{\pm 1\}$, and Theorem \ref{contra2} is one
case of Theorem \ref{contra}.

Case 2: The covering map $\widetilde G\rightarrow G$ does not split.
Then $G=\widetilde{\Sp}(E)$ (\cite[Theorem 10.4]{Moo68}), and
Theorem \ref{contra2} is one case of Theorem \ref{contras}.

\subsection{Proof of Theorem \ref{contra2} when $A\neq \rk$} Assume
that $A\neq \rk$. Then $\oU(\E)$ is a general linear group or a
unitary group.
\begin{lemp}\label{contrac}
There exists a genuine character  on $\widetilde{\oU}(\E)$.
\end{lemp}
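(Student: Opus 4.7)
The plan is to produce the genuine character by showing that the double cover $\widetilde{\oU}(\E) \to \oU(\E)$ splits as a continuous central extension by $\{\pm 1\}$. If $s \colon \oU(\E) \to \widetilde{\oU}(\E)$ is any continuous homomorphic section, then $\widetilde{\oU}(\E) = s(\oU(\E)) \cdot \{\pm 1\}$ with trivial intersection, so that $\widetilde{\oU}(\E) \cong s(\oU(\E)) \times \{\pm 1\}$, and the sign projection onto the second factor is a genuine character of $\widetilde{\oU}(\E)$.

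To build the splitting I would separate the two subcases of the hypothesis $A \neq \rk$. If $A = \rk \times \rk$ with the swap involution, the two idempotents of $A$ decompose $\E = X \oplus Y$ as a sum of two $\rk$-subspaces; an easy computation with the skew-hermitian form shows that $X$ and $Y$ are totally isotropic in $\E_\rk$ and are placed in perfect $\rk$-duality, so they are complementary Lagrangians. Since elements of $\oU(\E)$ are $A$-linear, they respect the idempotent decomposition, yielding $\oU(\E) \cong \GL(X)$ embedded in $\Sp(\E_\rk)$ as the Levi component of the Siegel parabolic stabilizing $X$. Rao's cocycle is a coboundary on any Siegel parabolic, giving a canonical splitting over $\oU(\E)$ in this case. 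If instead $A$ is a quadratic field extension of $\rk$, then $\oU(\E)$ is a unitary group, and I would invoke the splitting theorem of Kudla (later revisited by Harris--Kudla--Sweet): any continuous character $\chi$ of $A^\times$ whose restriction to $\rk^\times$ equals the quadratic character attached to the extension $A/\rk$ determines a continuous homomorphic splitting of $\widetilde{\oU}(\E) \to \oU(\E)$. Such $\chi$ always exist by local class field theory applied to the norm map $A^\times \to \rk^\times$.

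The main obstacle is the unitary subcase, which is not elementary and rests on Kudla's explicit construction of the splitting in terms of Leray invariants and the character $\chi$; but since the present lemma asks only for existence, no formula for $\chi$ or $s$ needs to be written down. The $\GL$ subcase is routine once the decomposition $\E = X \oplus Y$ is in hand, and the final step of extracting a genuine character from a continuous splitting is formal.
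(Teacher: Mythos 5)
Your overall architecture (split a cover over $\oU(\E)$, then read off a genuine character) is the same as the paper's, and your structural observations are fine: for $A=\rk\times\rk$ the idempotents do give complementary Lagrangians $X,Y$ with $\oU(\E)\cong\GL(X)$ the Siegel Levi, and for $A$ a field one does invoke Kudla and Harris--Kudla--Sweet. But there is a genuine gap at the very first step: you assert that the \emph{double} cover $\widetilde{\oU}(\E)\to\oU(\E)$ splits, and this is false in general. What splits is the pushed-out $\C^\times$-extension
\[
   1\rightarrow \C^\times \rightarrow (\widetilde{\Sp}(\E_\rk)\times
   \C^\times)/\operatorname{diag}(\{\pm 1\})\rightarrow \Sp(\E_\rk)\rightarrow 1,
\]
and that is exactly what the paper uses: from a continuous section $\iota$ of this $\C^\times$-cover over $\oU(\E)$ one gets the genuine character $x\mapsto x^{-1}\iota(p(x))$, valued in $\C^\times$ rather than in $\{\pm1\}$. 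Existence of a genuine character is equivalent to splitting of the $\C^\times$-cover, which is strictly weaker than splitting of the $\{\pm1\}$-cover, since $H^2(\oU(\E),\{\pm1\})\to H^2(\oU(\E),\C^\times)$ has nontrivial kernel.

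Concretely, in the $\GL$ case Rao's cocycle restricted to the Siegel parabolic is the Hilbert symbol $(\det a_1,\det a_2)_\rk$ on the Levi, which is \emph{not} a coboundary with values in $\{\pm1\}$ (already for $\GL_1$ one gets the nonsplit Hilbert-symbol double cover of $\rk^\times$); it only becomes a coboundary after pushing out to $\C^\times$, via the Weil index $\gamma_\psi(\det(\cdot))$. Likewise, in the unitary case Kudla's theorem attached to a character $\chi$ of $A^\times$ with $\chi|_{\rk^\times}$ the quadratic character of $A/\rk$ produces a homomorphic lifting of $\oU(\E)$ into the $\C^\times$- (or $\C^1$-) cover of $\Sp(\E_\rk)$, not into $\widetilde{\Sp}(\E_\rk)$ itself; as stated, your splitting does not exist. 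The fix is small but essential: replace ``the double cover splits'' by ``the $\C^\times$-cover splits'' throughout, and then extract the genuine character as the discrepancy between the section and the covering map, as in the paper.
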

\begin{proof}
It is well known that the exact sequence
\[
   1\rightarrow \C^\times \rightarrow (\widetilde{\Sp}(\E_\rk)\times
   \C^\times)/\operatorname{diag}(\{\pm 1\})\rightarrow \Sp(\E_\rk)\rightarrow 1
\]
splits continuously over $\oU(\E)$ (this is trivial for general
linear groups, and for unitary groups, see \cite[Proposition 4.1]{Ku} or \cite[Section 1]{HKS96}).
Write $\iota$ for such a splitting and write $p:
\widetilde{\oU}(\E)\rightarrow \oU(\E)$ for the covering map. Then
\[
   x\in \widetilde{\oU}(\E)\mapsto x^{-1}\,\iota(p(x))\in \C^\times
\]
is a genuine character.
\end{proof}

\begin{lemp}\label{contrac2}
There exists a genuine character $\chi$ of $\widetilde G$ such that
$\chi^{\breve g}=\chi^{-1}$ for all $\breve g\in \breve G\setminus
G$.
\end{lemp}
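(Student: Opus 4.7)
My plan is to obtain $\chi$ by restricting a suitable character of $\widetilde{\oU}(\E)$ to $\widetilde{G}$, and to deduce the desired $\breve g$-equivariance directly from Theorem \ref{contram} applied to the skew-hermitian module $\E$ in place of $E$. First observe that $g\mapsto g\otimes\mathrm{id}_{E'}$ embeds $G$ into $\oU(\E)$, and hence identifies $\widetilde{G}$ with the preimage of $G$ in $\widetilde{\oU}(\E)$, viewed as a subgroup of $\widetilde{\Sp}(\E_\rk)$. By Lemma \ref{contrac} there is a genuine character $\chi_0$ of $\widetilde{\oU}(\E)$; let $\chi$ be its restriction to $\widetilde{G}$. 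Since the central $-1$ of $\widetilde{G}$ coincides with that of $\widetilde{\oU}(\E)$, $\chi$ is automatically genuine.

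The key step is that $\E$ is skew-hermitian (i.e.\ $\epsilon=-1$ for $\E$), so Theorem \ref{contram} applies with $E$ there replaced by $\E$. Taking the irreducible admissible smooth representation in its statement to be the one-dimensional $\chi_0$, and using that isomorphism of characters is literal equality of characters, I obtain
\begin{equation*}
\chi_0^{\breve{\mathbf g}_0}=\chi_0^{-1}\qquad\text{for every }\breve{\mathbf g}_0\in \breve{\oU}(\E)\setminus \oU(\E).
\end{equation*}
This is the character-level incarnation, on the big group $\widetilde{\oU}(\E)$, of the property we ultimately want on $\widetilde{G}$.

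Finally, given $\breve g=(g,-1)\in \breve G\setminus G$, pick any $(g',-1)\in \breve G'$ and set $\breve{\mathbf g}:=(g\otimes g',-1)$, which lies in $\breve{\oU}(\E)\setminus \oU(\E)$. By the very definition \eqref{liftact}, the action $\widetilde{\Ad}_{\breve g}$ on $\widetilde{G}$ is the restriction of $\widetilde{\Ad}_{\breve{\mathbf g}}$ on $\widetilde{\Sp}(\E_\rk)$, and the latter restricts further to the action of $\breve{\mathbf g}$ on $\widetilde{\oU}(\E)$ used in the previous step. Restricting the identity $\chi_0^{\breve{\mathbf g}}=\chi_0^{-1}$ along the inclusion $\widetilde{G}\hookrightarrow \widetilde{\oU}(\E)$ therefore yields $\chi^{\breve g}=\chi^{-1}$. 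I expect no substantive obstacle beyond carefully tracking the compatibility of adjoint actions through the nested covers $\widetilde{G}\subseteq \widetilde{\oU}(\E)\subseteq \widetilde{\Sp}(\E_\rk)$; all the real content has already been absorbed into Theorem \ref{contram}.
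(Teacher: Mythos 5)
Your proposal is correct and follows essentially the same route as the paper: restrict a genuine character $\chi_\E$ of $\widetilde{\oU}(\E)$ (Lemma \ref{contrac}) to $\widetilde G$, apply Theorem \ref{contram} to the skew-hermitian module $\E$ (with the one-dimensional $\chi_\E$ as the representation, so that the isomorphism $\chi_\E^{\breve{\mathbf g}}\cong\chi_\E^\ve=\chi_\E^{-1}$ is an equality of characters), and conclude via the compatibility of $\widetilde{\Ad}_{\breve g}$ on $\widetilde G$ with $\widetilde{\Ad}_{\breve{\mathbf g}}$ on $\widetilde{\oU}(\E)$ for $\breve{\mathbf g}=(g\otimes g',-1)$, which is exactly the commutative diagram \eqref{cd1} in the paper's argument.
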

\begin{proof}
As in Section \ref{intro},  let
\[
   \breve{g}=(g,-1)\in \breve{G}\setminus G, \quad (g',-1)\in
\breve{G'}\setminus G',
\]
and write
\[
  \breve{\mathbf g}:=(g\otimes g',-1)\in \breve{\oU}(\E)\setminus \oU(\E).
\]
It is obvious that the diagram
\begin{equation}\label{cd1}
   \begin{CD}
     \widetilde{\oU}(\E)@>\widetilde{\Ad}_{\breve{\mathbf g}}>>     \widetilde{\oU}(\E) \\
      @AAA                 @ AAA\\
     \widetilde{G}@>\widetilde{\Ad}_{\breve{g}}>>     \widetilde{G} \\
   \end{CD}
\end{equation}
commutes.

Take a character $\chi_\E$ as in Lemma \ref{contrac}, and denote by
$\chi$ its restriction to $\widetilde G$. Then
\begin{eqnarray*}
   && \quad\,\chi^{\breve g}\\
   && =(\chi_\E|_{\widetilde G})^{\breve g}\\
   && =(\chi_\E^{\breve{\mathbf g}})|_{\widetilde G}\qquad\,\, \quad\textrm{by commutativity of (\ref{cd1})}\\
   && =(\chi_\E^{-1})|_{\widetilde G} \qquad \quad\textrm{by Theorem \ref{contram}}\\
   && =\chi^{-1}.
\end{eqnarray*}

\end{proof}
Fix $\chi$ as in Lemma \ref{contrac2}. Let $\breve{g}\in \breve
G\setminus G$, and let $\pi$ be a genuine irreducible admissible
smooth representation of $\widetilde G$. Then $\pi\otimes \chi$ descends to 
an irreducible admissible smooth representation of $G$. By Theorem
\ref{contra},
\[
  (\pi\otimes \chi)^{\breve g}\cong (\pi\otimes \chi)^\ve,
\]
or equivalently,
\[
  \pi^{\breve g}\otimes \chi^{\breve g}\cong \pi^\ve\otimes \chi^{-1},
\]
Therefore, $\pi^{\breve g}\cong \pi^\ve$  since $\chi^{\breve
g}=\chi^{-1}$. This proves Theorem \ref{contra2} when $A\neq \rk$.

\subsection{Proof of Theorem \ref{contra2} for orthogonal groups}
Assume that $A=\rk$ and $\epsilon=1$, i.e., $G$ is an orthogonal
group. In what follows, we show that Lemma \ref{contrac2} still
holds in this case. Fix a complete polarization
\[
  E'=E'_+\oplus E'_-
\]
of the symplectic space $E'$. Then
\[
   \E=\E_+\oplus \E_-
\]
is a complete polarization of the symplectic space $\E$, where
$\E_{\pm}:=E\otimes E'_{\pm}$. Depending on this polarization, we
define a skew-hermitian $\rk\times \rk$-module $\E'$ as follows. As
an abelian group, $\E'=\E$. The scalar multiplication is given by
\[
   (ae_1 + be_2)(u+v):=au+bv, \quad a,b\in \rk, \, u\in \E_+, v\in \E_-,
\]
where
\[
  e_1:=(1,0)\quad \textrm{and}\quad e_2:=(0,1)
\]
are the two idempotent elements of $\rk\times \rk$. The
skew-hermitian form is given by
\[
  \la u_+ + u_-,v_+ + v_-\ra_{\E'}:=\la u_+,v_-\ra_{\E}\, e_1 +\la u_-,v_+\ra_{\E} \,e_2,
\]
where $u_+, v_+\in \E_+$, $u_-, v_-\in \E_-$.

Let $\breve g=(g,-1)\in \breve G\setminus G$. Choose an element
$(g',-1)\in \breve G'\setminus G'$ such that
\[
  g'(E'_+)=E'_-\quad \textrm{and}\quad g'(E'_-)=E'_+.
\]
Then
\[
   \breve{\mathbf g}:=(g\otimes g',-1)\in \breve{\oU}(\E')\setminus
   \oU(\E'),
\]
and the diagram
\[
   \begin{CD}
     \widetilde{\oU}(\E')@>\widetilde{\Ad}_{\breve{\mathbf g}}>>     \widetilde{\oU}(\E') \\
      @AAA                 @ AAA\\
     \widetilde{G}@>\widetilde{\Ad}_{\breve{g}}>>     \widetilde{G} \\
   \end{CD}
\]
commutes.

Take a genuine character $\chi_{\E'}$ of $\widetilde{\oU}(\E')$ as
in Lemma \ref{contrac}, and denote by $\chi$ its restriction to
$\widetilde G$. Then as in the proof of Lemma \ref{contrac2}, we
show that $\chi$ fulfills the requirement of Lemma \ref{contrac2}.
Now we argue as in the end of the last subsection, and prove Theorem
\ref{contra2} for orthogonal groups.

The proof of Theorem \ref{contra2} is now complete by combining this
subsection with the last two subsections.

\subsection{Proof of Theorem \ref{contra4}}
The group
\[
   \breve{\G}:=\breve{G}\times_{\{\pm 1\}} \breve{G'}=\{(g,g',\delta)\mid (g,\delta)\in
  \breve{G},\,(g',\delta)\in\breve{G'}\}
\]
contains
\[
  \G:=G\times G'
\]
as a subgroup of index two. Define a homomorphism
\[
   \xi: \breve{\G}\rightarrow \breve{\Sp}(\E_\rk), \quad (g,g',\delta)\mapsto (g\otimes g',\delta).
\]
By using the covering map 
\[
 \widetilde G\times \widetilde
  G'\rightarrow \G=G\times G'
\]
and the map 
\[
 \xi|_{\G}:\G\rightarrow \Sp(\E_\rk),
\]
we form the semidirect product
\[
  (\widetilde G\times \widetilde
  G')\ltimes \oH(\E)
\]
as in Section \ref{sub1}. Let $\breve{\G}$ act on $(\widetilde
G\times \widetilde G')\ltimes \oH(\E)$ as group automorphisms by
\begin{equation}\label{actionbg}
  \breve{\mathbf g}. (x,y,z):=(\widetilde{\Ad}_{\breve g}(x), \,\widetilde{\Ad}_{\breve
  g'}(y), \, \xi(\breve{\mathbf g}).z),
\end{equation}
where
\[
  \breve{\mathbf g}=(g,g',\delta), \quad \breve g=(g,\delta), \quad \breve
  g'=(g',\delta),
\]
and the last term of the right hand side of \eqref{actionbg} is defined as in \eqref{actbr}.

Let $\omega_\psi$, $\pi$ and $\pi'$ be as in Theorem \ref{contra4}.

\begin{lemp}\label{contrao}
View $\omega_\psi$ as an admissible smooth representation of
$(\widetilde G\times \widetilde G')\ltimes \oH(\E)$ (via the restriction). Then for every
$\breve{\mathbf g}\in \breve{\G}\setminus \G$, we have
\[
  \omega_\psi^\ve\cong \omega_\psi^{\breve{\mathbf g}}.
\]

\end{lemp}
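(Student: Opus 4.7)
The plan is to identify both representations as restrictions of representations of the ambient group $\widetilde{\Sp}(\E_\rk)\ltimes \oH(\E)$, and then invoke the uniqueness of the oscillator representation.

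First I would verify a compatibility of lifted adjoint actions. By the construction of $\widetilde{\Ad}_{\breve g}$ and $\widetilde{\Ad}_{\breve g'}$ given in Section~\ref{intro} (through the fiber products defining $\widetilde G$ and $\widetilde G'$), the action of $\breve{\mathbf g}\in \breve{\G}$ on the semidirect product $(\widetilde G\times \widetilde G')\ltimes \oH(\E)$ is the restriction of the automorphism $\widetilde{\Ad}_{\xi(\breve{\mathbf g})}$ of the larger group $\widetilde{\Sp}(\E_\rk)\ltimes \oH(\E)$ along the natural homomorphism between these two groups. Consequently, $\omega_\psi^{\breve{\mathbf g}}$ is the restriction to $(\widetilde G\times \widetilde G')\ltimes \oH(\E)$ of the representation $\omega_\psi^{\xi(\breve{\mathbf g})}$ of the larger group, and $\omega_\psi^\ve$ on the smaller group is similarly the restriction of $\omega_\psi^\ve$ on the larger group (since the contragredient commutes with restriction). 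It therefore suffices to prove $\omega_\psi^\ve\cong \omega_\psi^{\xi(\breve{\mathbf g})}$ as representations of $\widetilde{\Sp}(\E_\rk)\ltimes \oH(\E)$.

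Both $\omega_\psi^\ve$ and $\omega_\psi^{\xi(\breve{\mathbf g})}$ are genuine irreducible admissible smooth representations of $\widetilde{\Sp}(\E_\rk)\ltimes \oH(\E)$. A direct computation of the central character on the center $\rk\subseteq \oH(\E)$ yields $\psi^{-1}$ in each case: for the contragredient this is immediate; for the twist it follows because $\xi(\breve{\mathbf g})=(g\otimes g',-1)$ sends $(0,t)\in \oH(\E)$ to $(0,-t)$. By the Stone--von Neumann theorem together with the uniqueness of the genuine extension of the Heisenberg representation to $\widetilde{\Sp}(\E_\rk)$, there is, up to isomorphism, a unique genuine irreducible admissible smooth representation of $\widetilde{\Sp}(\E_\rk)\ltimes \oH(\E)$ whose central character on $\rk$ equals $\psi^{-1}$, namely $\omega_{\psi^{-1}}$. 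Both of our representations must therefore be isomorphic to $\omega_{\psi^{-1}}$, and hence to each other, as representations of the ambient group; restricting back to $(\widetilde G\times \widetilde G')\ltimes \oH(\E)$ yields the lemma.

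The only non-routine point is the compatibility of the lifted adjoint actions across the inclusion of the small group into the large group; but this is built into the definition of $\widetilde{\Ad}_{\breve g}$ in Section~\ref{intro} via the fiber product, so there is no real obstacle beyond unpacking notation.
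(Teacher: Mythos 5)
Your proposal is correct and follows essentially the same route as the paper: reduce to the ambient group $\widetilde{\Sp}(\E_\rk)\ltimes \oH(\E)$, observe that both $\omega_\psi^\ve$ and $\omega_\psi^{\xi(\breve{\mathbf g})}$ are oscillator representations with central character $\psi^{-1}$ and hence isomorphic, then restrict. The compatibility of the lifted adjoint actions and the central-character computation that you spell out are exactly what the paper leaves implicit.
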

\begin{proof}
Recall that the group $\breve{\Sp}(\E_\rk)$ acts on
$\widetilde{\Sp}(\E_\rk)\ltimes \oH(\E)$ diagonally through its
action on the two factors. We have
\begin{equation}\label{congo}
\omega_\psi^\ve\cong \omega_\psi^{\xi(\breve{\mathbf g})}
\end{equation}
as smooth oscillator representations of
$\widetilde{\Sp}(\E_\rk)\ltimes \oH(\E)$, since both corresponding
to the character $\psi^{-1}$. We prove the lemma by restricting both
sides of (\ref{congo}) to the group $(\widetilde G\times \widetilde
G')\ltimes \oH(\E)$.
\end{proof}

\begin{lemp}\label{contrapi}
View $\pi$ and $\pi'$ as admissible smooth representations of
$(\widetilde G\times \widetilde G')\ltimes \oH(\E)$ (via the inflations). Then for every
$\breve{\mathbf g}\in \breve{\G}\setminus \G$, we have
\begin{equation}\label{congpi}
  \pi^\ve\cong \pi^{\breve{\mathbf g}}\quad \textrm{and}\quad \pi'^\ve\cong \pi'^{\breve{\mathbf
  g}}.
\end{equation}

\end{lemp}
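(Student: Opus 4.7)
The plan is to reduce Lemma \ref{contrapi} directly to Theorem \ref{contra2}, which was established in the preceding subsections. First, I would verify that the inflation from $\widetilde{G}$ to $(\widetilde{G}\times \widetilde{G}')\ltimes \oH(\E)$ proceeds through a well-defined group homomorphism
\[
  p\colon (\widetilde{G}\times \widetilde{G}')\ltimes \oH(\E)\twoheadrightarrow \widetilde{G}, \qquad (x,y,z)\mapsto x.
\]
This is a homomorphism because $\widetilde{G}'\ltimes \oH(\E)$ is normal (it is the kernel of $p$), which follows from the facts that $\widetilde{G}$ and $\widetilde{G}'$ commute inside $\widetilde{G}\times \widetilde{G}'$ and that $\oH(\E)$ is by construction normal in the semidirect product. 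The inflated representation is then just $\pi\circ p$.

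Next, I would check compatibility of the $\breve{\G}$-action with $p$. Given $\breve{\mathbf g}=(g,g',-1)\in \breve{\G}\setminus \G$, set $\breve g=(g,-1)\in \breve G\setminus G$. Formula \eqref{actionbg} shows that the first coordinate of $\breve{\mathbf g}.(x,y,z)$ is precisely $\widetilde{\Ad}_{\breve g}(x)$, hence $p\circ \breve{\mathbf g}=\widetilde{\Ad}_{\breve g}\circ p$. It follows that $\pi^{\breve{\mathbf g}}$ is the $p$-inflation of $\pi^{\breve g}$. Since taking contragredients trivially commutes with inflation, $\pi^\ve$ on the semidirect product is likewise the $p$-inflation of $\pi^\ve$ on $\widetilde{G}$.

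Finally, I would invoke Theorem \ref{contra2} on $\widetilde{G}$ to obtain $\pi^{\breve g}\cong \pi^\ve$ as genuine irreducible admissible smooth representations of $\widetilde{G}$; inflating back through $p$ yields the desired isomorphism $\pi^{\breve{\mathbf g}}\cong \pi^\ve$. The isomorphism $\pi'^{\breve{\mathbf g}}\cong \pi'^\ve$ is obtained by the completely symmetric argument, using the projection onto $\widetilde{G}'$ and applying Theorem \ref{contra2} to $\widetilde{G}'$; this is legitimate because Theorem \ref{contra2} is proved uniformly for either member of the dual pair, the roles of $(E,\epsilon)$ and $(E',\epsilon')$ being interchangeable in its hypotheses. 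I do not anticipate any real obstacle: the whole lemma is formal bookkeeping once Theorem \ref{contra2} is in hand, the only point deserving care being the verification that inflation intertwines twisting and contragredient as claimed.
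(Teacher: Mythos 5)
Your proposal is correct and follows essentially the same route as the paper: the paper's proof likewise applies Theorem \ref{contra2} to get $\pi^\ve\cong\pi^{\breve g}$ on $\widetilde G$ and then pulls the isomorphism back to $(\widetilde G\times \widetilde G')\ltimes \oH(\E)$, treating $\pi'$ symmetrically. Your write-up merely makes explicit the bookkeeping (the projection $p$, its equivariance via \eqref{actionbg}, and the compatibility of contragredient with inflation) that the paper leaves implicit.
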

\begin{proof}
Write
\[
  \breve{\mathbf g}=(g,g',-1), \quad \textrm{and}\quad\breve g=(g,-1).
\]
By Theorem \ref{contra2},
\[
  \pi^\ve\cong \pi^{\breve{g}}
\]
as irreducible admissible smooth representations of $\widetilde G$.
Pull back this isomorphism to the group $(\widetilde G\times
\widetilde G')\ltimes \oH(\E)$, we obtain the first isomorphism of
(\ref{congpi}). The second isomorphism follows similarly.
\end{proof}

\begin{lemp}\label{contrat}
For every $\breve{\mathbf g}\in \breve{\G}\setminus \G$,
we have
\begin{equation}\label{tensor}
 \omega_\psi^\ve \otimes \pi^\ve\otimes \pi'^\ve\cong (\omega_\psi\otimes \pi\otimes
\pi')^{\breve{\mathbf g}}
\end{equation}
as smooth representations of $(\widetilde G\times \widetilde
G')\ltimes \oH(\E)$.
\end{lemp}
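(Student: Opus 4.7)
The plan is to obtain Lemma \ref{contrat} as a direct consequence of the three preceding lemmas (\ref{contrao} and \ref{contrapi}), combined with the elementary observation that twisting commutes with tensor products of representations when the group acts diagonally.

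More precisely, I would first note that for any three smooth representations $\sigma_1,\sigma_2,\sigma_3$ of $(\widetilde G\times \widetilde G')\ltimes \oH(\E)$, and for any automorphism $\alpha$ of this group, the diagonal action on $\sigma_1\otimes \sigma_2\otimes \sigma_3$ gives the tautological identification
\[
   (\sigma_1\otimes \sigma_2\otimes \sigma_3)^{\alpha}= \sigma_1^{\alpha}\otimes \sigma_2^{\alpha}\otimes \sigma_3^{\alpha}
\]
as representations on the same underlying vector space. Applying this with $\alpha$ equal to the action of $\breve{\mathbf g}$ on $(\widetilde G\times \widetilde G')\ltimes \oH(\E)$ described in \eqref{actionbg}, and taking $\sigma_1=\omega_\psi$, $\sigma_2=\pi$, $\sigma_3=\pi'$, reduces the lemma to producing isomorphisms between each factor and its contragredient twisted appropriately.

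Next I would invoke Lemma \ref{contrao} to obtain $\omega_\psi^\ve\cong \omega_\psi^{\breve{\mathbf g}}$, and Lemma \ref{contrapi} to obtain $\pi^\ve\cong \pi^{\breve{\mathbf g}}$ and $\pi'^\ve\cong \pi'^{\breve{\mathbf g}}$, all as smooth representations of $(\widetilde G\times \widetilde G')\ltimes \oH(\E)$. Tensoring these three isomorphisms yields
\[
   \omega_\psi^\ve\otimes \pi^\ve\otimes \pi'^\ve \cong \omega_\psi^{\breve{\mathbf g}}\otimes \pi^{\breve{\mathbf g}}\otimes \pi'^{\breve{\mathbf g}} \cong (\omega_\psi\otimes \pi\otimes \pi')^{\breve{\mathbf g}},
\]
which is exactly \eqref{tensor}.

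There is essentially no obstacle in this argument; all the substantive content (the equivariance of the oscillator representation under outer automorphisms, and the result on individual irreducible representations of the double covers) has already been packaged into Lemmas \ref{contrao} and \ref{contrapi}. The only minor point to verify is bookkeeping: that the tensor product of the three isomorphisms is genuinely an isomorphism of representations of the full semidirect product $(\widetilde G\times \widetilde G')\ltimes \oH(\E)$, not just of some proper subgroup. This is automatic because each of the three individual isomorphisms is already taken with respect to this larger group.
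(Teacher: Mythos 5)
Your proposal is correct and is essentially identical to the paper's own (one-line) proof, which simply combines Lemma \ref{contrao} and Lemma \ref{contrapi}; you have merely made explicit the tautological compatibility of twisting with tensor products.
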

\begin{proof}
This is a combination of Lemma \ref{contrao} and Lemma
\ref{contrapi}.
\end{proof}

Fix an element $\breve{\mathbf g}\in \breve{\G}\setminus \G$. Since the action of $\breve{\mathbf g}$ stabilizes the
subgroup $\widetilde G\times \widetilde G'$ of $(\widetilde G\times
\widetilde G')\ltimes \oH(\E)$, we have
\begin{equation}\label{home}
  \Hom_{\widetilde G\times \widetilde G'}(\omega_\psi\otimes \pi\otimes
  \pi',\C)=\Hom_{\widetilde G\times \widetilde G'}((\omega_\psi\otimes \pi\otimes
  \pi')^{\breve{\mathbf g}},\C).
\end{equation}
Now Theorem \ref{contra4} is a consequence of (\ref{tensor}) and
(\ref{home}).


\begin{thebibliography}{99}

\bibitem[BZ]{BZ76}
I. N. Bernstein and A. V. Zelevinsky, \textit{Representations of
the group $\GL(n, F)$ where $F$ is a non-archimedean local field},
Russian Mathematical Surveys (1976) 31, no. 3, 1-68.


\bibitem[HKS]{HKS96}
M. Harris, S. Kudla, and W. Sweet, \textit{Theta dichotomy for
unitary groups}, J. Amer. Math. Soc., 9(4), 941-1004 (1996).

\bibitem[Ku]{Ku}
S. Kudla, \textit{Splitting metaplectic covers of dual reductive pairs}, Israel J. Math. 87 (1994) 361-401. 


\bibitem[LST]{LST}
J.-S. Li, B. Sun and Y. Tian, \textit{Multiplicity preservations
for orthogonal-symplectic and unitary dual pair correspondences},
http://arxiv.org/abs/0903.1419


\bibitem[MVW]{MVW87}
C. Moeglin, M.-F. Vigneras, and J.-L. Waldspurger,
\textit{Correspondence de Howe sur un corp p-adique}, Lecture
Notes in Math. 1291, Springer, 1987.

\bibitem[Moo]{Moo68}
C. C. Moore, \textit{Group extensions of p-adic and adelic linear
groups}, Pub. Math. I.H.E.S. 35, 5-70 (1968).

\bibitem[Pr]{Pr88}
T. Przebinda, \textit{On Howe's duality theorem}, J. Funct. Anal. 81 (1988), no. 1, 160-183.


\bibitem[Sun]{Sun08}
B. Sun, \textit{Multiplicity one theorems for symplectic groups},
http://arxiv.org/abs/0903.1417




\end{thebibliography}
\end{document}